\DeclareMathAlphabet{\pazocal}{OMS}{zplm}{m}{n} 
\newtheorem{theorem}{Theorem}[section]
\newtheorem{lemma}[theorem]{Lemma}
\newtheorem{proposition}[theorem]{Proposition}
\newtheorem{corollary}[theorem]{Corollary}
\newtheorem{example}[theorem]{Example}
\newtheorem{remark}[theorem]{Remark}
\newtheorem*{conjecture}{Conjecture}
\newcommand{\R}{\mathbb{R}}
\newcommand{\bN}{\mathbb{N}}
\newcommand{\cC}{\mathcal{C}}
\newcommand{\cH}{\mathcal{H}}
\newcommand{\cA}{\mathcal{A}}
\newcommand{\eps}{\varepsilon}
\newcommand{\cO}{\mathcal{O}}
\newcommand{\spt}{\text{spt}}
\newcommand{\id}{\mbox{\textup{id}}}
\newcommand{\mres}{\mathbin{\vrule height 1.6ex depth 0pt width
0.13ex\vrule height 0.13ex depth 0pt width 1.3ex}} 
\newcommand*\into[1]{\mbox{\text{int}}(#1)}
\providecommand{\keywords}[1]{\textbf{Keywords:} #1}
\providecommand{\MSCtwoK}[1]{\textbf{2010 Mathematics Subject Classification:} #1}
\title{Besicovitch-Federer projection theorem for mappings having constant rank of the Jacobian matrix}
\author{Jacek Adam Ga\l{}\k{e}ski}
\date{\today}
\begin{document}
\maketitle
\begin{abstract}
The purpose of this article is to prove a generalisation of the Besicovitch-Federer projection theorem about a
characterisation of rectifiable and unrectifiable sets in terms of their projections.
For an $m$-unrectifiable set $\Sigma\subset\R^n$ having finite Hausdorff measure and $\eps>0$,
we prove that for a mapping $f\in\cC^1(U,\R^n)$ having constant, equal to $m$, rank of the Jacobian matrix there exists a mapping $f_\eps$ whose rank of the Jacobian matrix is also constant, equal to $m$, such that $\|f_\eps-f\|_{\cC^1}<\eps$ and $\cH^m(f_\eps(\Sigma))=0$. We derive it as a consequence of the Besicovitch-Federer theorem stating that the $\cH^m$ measure of a generic projection of an $m$-unrectifiable set $\Sigma$ onto an $m$-dimensional plane is equal to zero.
\end{abstract}
\keywords{Purely unrectifiable set, Hausdorff measure, rank of the Jacobian matrix}\newline
\MSCtwoK{(Primary) 28A75, (Secondary) 57N20}
\section{Introduction}

Rectifiability is one of the most important concepts investigated in geometric measure theory. \mbox{A rectifiable} set coupled with an orientation and integer multiplicity 
results in a notion of an \emph{integral current} which has two nice properties. First, the space of such currents enjoys a compactness property proved 
by Federer and Fleming in \cite{ff60}, and second, it contains the class of smooth orientable surfaces, and thus constitutes its natural generalisation.

The subject of this work originates from a modification of a systematic way of projecting cubes of Whitney partition onto their $m$-dimensional skeletons, 
see \cite[David and Semmes, Chapter 3]{unif-rect}. Let $\cH^m$ denote the $m$-dimensional Hausdorff measure. 
In \cite[2.9]{almExi} Almgren constructs a similar smooth mapping $l_m$ from almost all of $\R^n$ to 
the $m$-dimensional skeleton of cubes. Then, in Section 2.9(b) on the page 338 of the same work, Almgren claims that 
for a purely $m$-unrectifiable set $\Sigma$ with $\cH^m(\Sigma)<\infty$, there exists a perturbation $l^\ast_m$ arbitrarily close to $l_m$  in $\cC^1$ topology such that $\cH^m(l^\ast_m(\Sigma))=0$. We provide a proof of this claim for ``\emph{an arbitrary mapping having constant rank of the Jacobian matrix}'' instead of the map $l_m$.

The proof is based on a construction of a diffeomorphism $\Xi_\eps$ such that $\cH(f\circ\Xi_\eps(\Sigma))=0$. 
The significance of class $\cC^1$ comes from the potential application. A map of class $\cC^1$ allows for a continuous \emph{pushforward} of integral currents. This may be a useful tool in proving the existence of solutions, in this class, to variational problems, for example the Plateau problem. In this particular problem removing the unrectifiable part by a slight perturbation strictly decreases the measure, so it seems like a step in the right direction.

A similar problem has been recently studied by Pugh in \cite{Pughlocal}.
In this work Pugh investigates Lipschitz mappings arbitrarily close to identity in $\cC^0$ topology such that the measure of the image under this 
mappings of an unrectifiable part is arbitrarily small. Moreover, if this unrectifiable part was a part of a larger  set, the rectifiable part of the 
larger set is changed only 
slightly under this mapping. Note that Pugh constructs a Lipschitz map that alone abates the measure of unrectifiable part. We, on the other hand, build 
a diffeomorphism of the domain of a given map $f$ such that after a composition with $f$ the measure of the unrectifiable part is \emph{zero}.

Let $n,m\in\bN$ be such that $m<n$.  We say the set $E\subset\R^n$ is $m$-\emph{rectifiable} if there exist a countable family of Lipschitz maps 
$f_i:\R^m\rightarrow\R^n$ for $i\in\bN$, such that 
$\cH^m(E\setminus\bigcup
f_i(\R^m))=0.$ 
We say that the set $\Sigma$ is \emph{purely} $m$-\emph{unrectifiable} if 
$\cH^m(\Sigma\cap E)=0$ for every $m$-rectifiable set $E$.

The Besicovitch-Federer projection theorem was first proved by Besicovitch in \cite{Besico3} for one dimensional sets on the plane and then generalised to 
any dimension by Federer in \cite{Fed47}. We use the formulation given in \label{clasBF}\cite[Theorem 18.1]{Mattila1} which consists of two statements. 
The first one characterises rectifiable sets: a set $A$ is $m$-rectifiable if and only if the image of every subset $B\subset A$ of positive $\cH^m$-measure under
a generic projection has positive $\cH^m$-measure, i.e. $\cH^m(P_V(B))>0$. The second one concerns unrectifiable sets and states that for a generic orthogonal 
projection $P_V$ onto an $m$-dimensional plane $V$ the measure of the image of a purely $m$-unrectifiable set $\Sigma$ is zero, i.e. $\cH^m(P_V(\Sigma))=0$.
The statement about rectifiable sets expresses the intuitions: on the plane, the shadow cast by an interval has 
positive length unless the interval is parallel to the light rays. Moreover, we see that that $1$-unrectifiable sets despite having positive $\cH^1$-measure cast shadow of length zero for almost all directions of light rays.

On the pages that follow we generalise the second of the above statements. We consider the consequences of the substitution of the orthogonal projections 
in the B-F theorem by the class of mappings having constant rank of the Jacobian matrix. For an open set $U\subset \R^n$, we denote this class by
\begin{equation}
 \cC^1_{=m}(U,\R^n)\coloneqq\left\lbrace f\in \cC^1(U,\R^n)\:\vert\: \mathrm{dim}\:\mathrm{im}\: Df(x) = m,\forall x\in U \right\rbrace.
\end{equation}
This is a much wider class of mappings, and consequently our assertion differs from the result of the theorem of Besicovitch and Federer. Instead of a measure 
theoretic result we obtain \mbox{a topological} one involving the density. Precisely, in \cref{sec3} we prove the following theorem.
\begin{restatable*}{thm}{main}
\label{thm:main}
 For a purely $m$-unrectifiable set $\Sigma$ contained in an open set $U\subset \R^n$, \newline such that 
 $\mathcal{H}^m(\Sigma)<\nobreak\infty$, 
 let 
 $$\mathcal{A}(\Sigma)\coloneqq\left\lbrace f\in\cC^1(U,\R^n)\:\vert\: \mathrm{dim}\:\mathrm{im}\: Df = m\text{, } \mathcal{H}^m(f(\Sigma))>0  \right\rbrace.$$
 Then the interior of the set $\mathcal{A}(\Sigma)$
 in the $\cC^1$ topology is empty.
\end{restatable*}
The proof is based on a construction of a diffeomorphism of the domain that can be arbitrarily close to the identity, and such that the pullback of 
$f\in\mathcal{A}(\Sigma)$ by this diffeomorphism is no longer in the set $\mathcal{A}(\Sigma)$.
This construction is divided into two main steps. First, in \cref{sec2}, we prove a 
local variant of this theorem. 
The proof of local variant is based on the fact that for a mapping $f\in\cC^1_{=m}(U,\R^n)$ and a point $x\in U$ there exists a neighbourhood $V$ of $x$ 
and diffeomorphisms $\phi$ and $\psi$ of $\R^n$ such that $f{\vert}_V=\psi^{-1}\circ\pi\circ\phi{\vert}_V$, where $\pi$ is an orthogonal projection onto 
some $m$-plane.
We insert a rotation $\theta$ of 
$\R^n$ into this composition to obtain
$$f_\eps=f\circ\Xi=(\psi^{-1}\circ\pi\circ\phi)\:\circ\:(\phi^{-1}\circ\theta\circ\phi).$$ 
We then show that $\theta$ can be chosen so that $f_\eps$ is 
closer to $f$ than $\eps$ in $\cC^1$ topology and zeroes the measure of unrectifiable $\Sigma$ in the image 
, i.e. $\cH^m(f_\eps(\Sigma\cap V))=0$. 

Next, in \cref{sec3} we prove the main theorem. 
The crucial difficulty is to glue diffeomorphisms $\Xi=\phi^{-1}\circ\theta\circ\phi$, that do not agree on intersections and act non-trivially on 
boundaries of their individual domains, constructed in the first step of the proof. 
This is achieved by a careful construction of a countable family $\mathcal{F}$ of open, connected, 
pairwise disjoint sets, whose closures covers the domain $U$. 
Then, we iteratively apply the modified diffeomorphisms constructed in the local variant of the theorem to elements of the family $\mathcal{F}$.

In order to obtain this modification we interpolate between diffeomorphism from the local variant and the identity using a vector flow.
The result is that away from the boundary of an element of the family $\mathcal{F}$ the modified diffeomorphism is equal to the one from the local variant, 
and on the boundary it is equal to the identity. Once we have applied this modified diffeomorphism, the part of the set $\Sigma$ that was away from the boundary, after further composition with $f$, yields no measure.  

It is not the case when we consider the part of the set $\Sigma$ that is near the boundary. On the collar around the boundary the modified diffeomorphism is not equal to the mapping from the local variant and consequently we do not know the outcoming measure of the
perturbed $\Sigma$ after the composition with $f$.
The solution is to iterate modified diffeomorphisms as described in previous paragraph on decreasingly thinner collars around the boundaries.

%
%

Instead of $\cC^1_{=m}(U,\R^n)$ we may consider even a wider class of mappings having bounded rank of the Jacobian matrix, namely 
\begin{equation}
 \cC^1_{\leq m}(U,\R^n)\coloneqq\left\lbrace f\in \cC^1(U,\R^n)\:\vert\: \mathrm{dim}\:\mathrm{im}\: Df(x) \leq m,\forall x\in U \right\rbrace.
\end{equation}
In \cref{sec:lowrank} we discuss the potential extension of the main theorem. The argument would be \mbox{a simple} consequence of Sard's theorem under the following conjecture. 
 \begin{conjecture}
 \begin{equation}
 \cC^\infty_{\leq m}(U,\R^n)\text{ is dense in }
 \cC^1_{\leq m}(U,\R^n).
\end{equation}
\end{conjecture}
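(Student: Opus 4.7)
The naive approach---mollifying $f \in \cC^1_{\leq m}(U,\R^n)$ with a standard bump $\rho_\eps$---immediately fails: $D(f\ast\rho_\eps) = Df\ast\rho_\eps$ is a convex average of matrices of rank at most $m$, and such averages can produce matrices of rank up to $n$. So the class $\cC^1_{\leq m}$ is not preserved by linear smoothing, and any proof must exploit a structural decomposition of $f$ rather than act on $f$ directly.

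The strategy I would pursue is a stratify-and-factor argument. For $0 \leq k \leq m$ set $U_k \coloneqq \{x \in U : \mathrm{rank}\, Df(x) = k\}$, and let $V_k$ denote the interior (in $U$) of $U_0 \cup \cdots \cup U_k$ with the interior of $U_0 \cup \cdots \cup U_{k-1}$ removed, so that each $V_k$ is open, $f\vert_{V_k}$ has locally constant rank $k$, and $V \coloneqq \bigsqcup_k V_k$ is open and dense in $U$. On $V_k$ the constant rank theorem yields a local factorisation $f = \psi^{-1} \circ \pi_k \circ \phi$ through an $m$-plane, and smoothing $\phi$ and $\psi$ separately by mollification gives a local $\cC^\infty$ approximation whose rank is still bounded by $m$---the key point being that smoothing the \emph{factors} preserves the rank constraint even though smoothing the composition does not. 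One would then assemble these local pieces into a single global $\tilde{f} \in \cC^\infty_{\leq m}$ by working at the level of factors rather than of $f$, since partitions of unity applied to $f$ will generically violate the rank bound.

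The principal obstacle---and, I suspect, the reason this remains a conjecture---is the closed bad set $B \coloneqq U \setminus V$, where the rank of $Df$ is not locally constant. Although $B$ has empty interior, it can otherwise be highly irregular, even of positive Lebesgue measure, and the local factorisations on adjacent strata $V_k$, $V_{k'}$ need not extend continuously across their common boundary in $B$. My first attempt at a remedy would be a Whitney-type extension of the smoothed factors across $B$, exploiting that $f\vert_B$ is determined by continuity from $f\vert_V$; one would need a version of Whitney extension that respects the rank constraint, which is nontrivial in itself. A more robust alternative would be first to perturb $f$ inside $\cC^1_{\leq m}$ via a Thom-transversality argument applied to the rank loci of $Df$, arranging that $B$ becomes a finite union of $\cC^1$ submanifolds, and then to interpolate across those submanifolds in collar neighbourhoods using the flow-based device already employed in \cref{sec3} for the main theorem. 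Establishing that such a preliminary transversalisation can itself be carried out while remaining in $\cC^1_{\leq m}$ is, I believe, the genuine hard core of the conjecture.
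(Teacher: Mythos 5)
This statement is explicitly labelled a \emph{conjecture} in the paper, not a theorem: the paper offers no proof, only the one-line observation that mollification $\vartheta_\eps\star f$ does not preserve the constraint $\mathrm{rank}\,Df\leq m$. You identify exactly this obstruction in your opening paragraph, and you are candid and correct in offering a strategy rather than a proof; your diagnosis of the bad set $B$, where $\mathrm{rank}\,Df$ is not locally constant, as the genuine obstacle is a reasonable sharpening of the paper's remark. Since the paper leaves the question open, there is nothing here to compare against on the paper's side.

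A few technical caveats on the sketch itself are worth recording. The set $V_k$ as you define it need not be open: $\mathrm{int}\{\mathrm{rank}\leq k\}\setminus\mathrm{int}\{\mathrm{rank}\leq k-1\}$ is an open set intersected with a closed set. Moreover a point $x\in V_k$ need not have $\mathrm{rank}\,Df(x)=k$; you may only conclude that $x$ is a limit of rank-$k$ points while itself having rank $\leq k$, so $f$ need not have locally constant rank on $V_k$ and \cref{col1} need not apply there. What is true, by lower semicontinuity of rank, is that the union over $k\leq m$ of $\{x:\mathrm{rank}\,Df(x)=k\text{ and rank is locally constant at }x\}$ is open and dense, with complement $B$ closed, of empty interior, but possibly of positive measure. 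More seriously, the proposed remedy of smoothing the \emph{factors} $\phi,\psi$ in the constant-rank normal form does not obviously globalise even within a single stratum: those diffeomorphisms are only local, the charts need not patch, and a partition of unity applied at the factor level still produces convex combinations whose composite can violate the rank bound, the very difficulty that blocked smoothing $f$ itself. None of this contradicts the paper, which claims no proof; it only shows that your route has additional gaps besides the one you flag, so the conjecture remains open.
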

\noindent
The difficulty in proving this conjecture is that for $f\in\cC^\infty_{\leq m}(U,\R^n)$ the convolution with the mollifier $\vartheta_\eps\star f\notin\cC^\infty_{\leq m}(U,\R^n)$ because rank of the Jacobian matrix is generally no longer bounded by $m$.  
\subsection{Notation and conventions}

\begin{enumerate}[label={\arabic*.}]
 \item For $r\in\R$ and $A\subset \R^n$ the $r$-neighbourhood of the set $A$ is $$B_r(A)\coloneqq\bigcup_{p\in A}B(p,r).$$
 \item The \emph{Standard mollifier} $\vartheta$ is given by  
\begin{equation*}
 \vartheta(x)\coloneqq \left\lbrace
\begin{array}{ll}
 \nicefrac{\exp\left(\nicefrac{-1}{(1-|x|^2)}\right)}{I_n} &\mbox{for }|x|<1 \\
 0 &\mbox{for } |x| \geq 1
\end{array}
\right.,
\end{equation*}
where $I_n$ is such that $\smallint_{\R^n}\vartheta(x) dx=1$ and also denote $$\vartheta_\eps(x)\coloneqq\eps^{-n}\vartheta\left(\eps^{-1}x\right).$$
\end{enumerate}


\section{Local variant}\label{sec2}
Before we discuss the local variant of the theorem we shall explain how the orthogonal projections and the class $\cC^1_{=m}(U,\R^n)$ are related to each other.

For a continuously differentiable map $f:\R^n\rightarrow\R^m$ with 
$r(Df)\coloneqq\text{dim}\:\text{im}(Df)=m$ at some point $y$ there exist \cite[Th. 2-13 p.43]{Spivak} a neighbourhood $V\ni y$ and a mapping 
$h:V\rightarrow\R^n$ such that for $(x_1,\ldots,x_n)\in V$ we have
\begin{equation}\label{wlokn}
 f\circ h (x_1,x_2,\ldots,x_n)=(x_{n-m+1},x_{n-m+2},\ldots,x_n).
\end{equation}
In other words we say: there exist a diffeomorphism that straightens preimages $f^{-1}(y)$ for a $y\in \R^m$. Its inverse is the 
diffeomorphism $h$ mentioned above.

The image of a mapping $f$ from $\cC^k_{=m}(\R^n,\R^n)$
is locally an $m$-dimensional manifold, thus there exists a diffeomorphism 
$\psi:W\rightarrow \R^n$ of a neighbourhood $W\subset\R^n$
of the point $f(a)$ such that $\psi(f(U)\cap W)$ is contained in the plane 
$\lbrace (x_1,\ldots,x_n)\in\R^n\:\vert\: x_{m+1}=x_{m+2}=\ldots= x_n=0\rbrace$. 
By combining this observation and \eqref{wlokn} we obtain the classical result.
\begin{corollary}[Constant rank theorem]\label{col1}
Suppose $f\in\cC^k_{=m}(U,\R^n)$.  For any $x\in U$ there exist open sets $U_x \ni x$ and $W_{f(x)} \ni f(x)$, and diffeomorphisms $\phi_x:U_x\rightarrow \R^n$ and $\psi_{f(x)}:W_{f(x)}\rightarrow \R^n$ of class $\cC^k$ such that the mapping $(\psi_{f(x)}\circ f\circ {\phi_x}^{-1}) \colon\phi_x(U_x)\rightarrow \R^n$ satisfies:
\[
 (\psi_{f(x)}\circ f\circ {\phi_x}^{-1}) = P_V\vert_{\phi_x(U_x)},
\]
where $P_V:\R^n\rightarrow\R^n$ is the projection on an $m$-dimensional plane $V$.
\end{corollary}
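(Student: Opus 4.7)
The strategy, already outlined in the excerpt, is to construct $\phi_x$ and $\psi_{f(x)}$ by combining the submersion normal form~\eqref{wlokn} with a target diffeomorphism that flattens $f(U)$ into a coordinate $m$-plane. Fix $a\coloneqq x\in U$. After an orthogonal change of coordinates in $\R^n$ (absorbed into the eventual $\psi_{f(a)}$), we may assume $df_1(a),\dots,df_m(a)$ are linearly independent, so that $g\coloneqq(f_1,\dots,f_m)\colon U\to\R^m$ is a submersion at $a$; the constant-rank hypothesis then forces, throughout a neighbourhood of $a$, each of $df_{m+1},\dots,df_n$ to be a pointwise linear combination of $df_1,\dots,df_m$.

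Applying \eqref{wlokn} to $g$ yields a neighbourhood $U_x\ni a$ and a $\cC^k$ diffeomorphism $h$ with $g\circ h(x_1,\dots,x_n)=(x_{n-m+1},\dots,x_n)$. In these coordinates the level sets of $g\circ h$ are the affine slices $\{x_{n-m+1}=c_1,\dots,x_n=c_m\}$; since $df_i\in\operatorname{span}(df_1,\dots,df_m)$ for $i>m$, each $f_i\circ h$ is annihilated by $\partial/\partial x_1,\dots,\partial/\partial x_{n-m}$ and hence factors as $(f_i\circ h)(x)=\gamma_i(x_{n-m+1},\dots,x_n)$ for some $\cC^k$ function $\gamma_i$ on a neighbourhood of $g(a)$ in $\R^m$; equivalently $f_i=\gamma_i\circ g$ near $a$ for $i>m$.

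Now define
\[
 \psi_{f(a)}(z_1,\dots,z_n)\coloneqq\bigl(z_1,\dots,z_m,\,z_{m+1}-\gamma_{m+1}(z_1,\dots,z_m),\,\dots,\,z_n-\gamma_n(z_1,\dots,z_m)\bigr)
\]
on a neighbourhood $W_{f(a)}$ of $f(a)$; its Jacobian is block-triangular with identity diagonal blocks, so it is a local $\cC^k$ diffeomorphism by the inverse function theorem. By construction $\psi_{f(a)}\circ f(y)=(f_1(y),\dots,f_m(y),0,\dots,0)$, and consequently $\psi_{f(a)}\circ f\circ h(x)=(x_{n-m+1},\dots,x_n,0,\dots,0)$. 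Absorbing into $\phi_x$ the coordinate permutation $(x_1,\dots,x_n)\mapsto(x_{m+1},\dots,x_n,x_1,\dots,x_m)$ then gives $\psi_{f(a)}\circ f\circ\phi_x^{-1}(x)=(x_1,\dots,x_m,0,\dots,0)=P_V(x)$ with $V=\R^m\times\{0\}^{n-m}$, as required. The step I expect to be most delicate is the factorisation $f_i=\gamma_i\circ g$ for $i>m$: this is where the \emph{constant}-rank hypothesis (rather than mere rank $m$ at the single point $a$) is essential, and it is cleanest to verify via the block-Jacobian argument just sketched, since \eqref{wlokn} by itself only normalises $g$, not the whole map $f$.
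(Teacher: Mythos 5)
Your proposal follows the same two-step strategy the paper sketches: normalise the ``submersion part'' $g=(f_1,\dots,f_m)$ via \eqref{wlokn}, then flatten the image $f(U)$ into a coordinate $m$-plane. You additionally make explicit the factorisation $f_i=\gamma_i\circ g$ for $i>m$ and the resulting shear $\psi_{f(x)}$, which is precisely the content behind the paper's unproved assertion that $f(U)$ is locally an $m$-dimensional manifold, so this is a correct, somewhat more detailed, rendering of the same argument.
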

\noindent Therefore any map $f\in\cC^k_{=m}(U,\R^n)$ is similar in the above \emph{local} manner, to the orthogonal projection onto an 
$m$-dimensional plane. Therefore, the class $\cC^1_{=m}(U,\R^n)$ is a natural generalisation of projections.

Our temporary goal is the following \emph{local} form of the main theorem.
\begin{theorem}[Local variant]\label{localver}
Let $\eps>0$, $\Sigma$ be an $m$-unrectifiable set, such that $\cH^m(\Sigma)<\infty$ and $f\in \cC^1_{=m}(U,\R^n)$. For any point $x\in U$ there 
exist a neighbourhood $U(x,r)$,
and $f_\eps\in\cC^1_{=m}(U(x,r),\R^n)$ such that $${\|f-f_\eps\|\ }_{\cC^1(U(x,r),\R^n)}\leq \eps\quad\text{and}\quad\cH^m(f_\eps(\Sigma\cap U(x,r)))=0.$$
\end{theorem}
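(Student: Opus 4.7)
The plan is to use \cref{col1} to reduce locally to the setting of the classical Besicovitch--Federer projection theorem, and then to produce the perturbation by inserting a small rotation between the two straightening diffeomorphisms.

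Fix $x\in U$. By \cref{col1} there are a neighbourhood $U_x\ni x$ and $\cC^1$ diffeomorphisms $\phi\colon U_x\to\R^n$ and $\psi\colon W_{f(x)}\to\R^n$ with $f=\psi^{-1}\circ P_V\circ\phi$ on $U_x$, where $P_V$ is orthogonal projection onto some $m$-plane $V\subset\R^n$. Fix $r_0>0$ with $\overline{U(x,r_0)}\subset U_x$. For $\theta\in SO(n)$ sufficiently close to the identity, set
$$\Xi_\theta\;=\;\phi^{-1}\circ\theta\circ\phi,\qquad f_\eps\;=\;f\circ\Xi_\theta\;=\;\psi^{-1}\circ P_V\circ\theta\circ\phi.$$
Since $\phi,\theta,\psi^{-1}$ are diffeomorphisms and $P_V$ has rank $m$, the composition has rank exactly $m$ at every point, so $f_\eps\in\cC^1_{=m}(U(x,r),\R^n)$ on some $U(x,r)\subset U(x,r_0)$ for which $\Xi_\theta(U(x,r))\subset U_x$. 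The $\cC^1$-closeness $\|f-f_\eps\|_{\cC^1}\le\eps$ follows from the chain rule $Df_\eps(y)=Df(\Xi_\theta(y))\,D\Xi_\theta(y)$ together with uniform continuity of $Df$ on $\overline{U(x,r_0)}$ and the fact that $\Xi_\theta\to\id$ in $\cC^1$ as $\theta\to\id$ in $SO(n)$.

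The crucial step is to choose $\theta$ so that $\cH^m(f_\eps(\Sigma\cap U(x,r)))=0$. Set $A=\phi(\Sigma\cap U(x,r))$; because $\phi$ is bi-Lipschitz on the closure of $U(x,r)$, $A$ is again purely $m$-unrectifiable with $\cH^m(A)<\infty$. Using that $\theta$ is a Euclidean isometry, one has the identity $P_V\circ\theta=\theta\circ P_{\theta^{-1}(V)}$, so
$$\cH^m\bigl(P_V(\theta(A))\bigr)\;=\;\cH^m\bigl(P_{\theta^{-1}(V)}(A)\bigr).$$
The classical Besicovitch--Federer theorem applied to $A$ yields $\cH^m(P_{V'}(A))=0$ for $\gamma_{n,m}$-a.e.\ $V'\in G(n,m)$. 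The map $\theta\mapsto\theta^{-1}(V)$ is a smooth surjective submersion $SO(n)\to G(n,m)$ pushing Haar measure to the invariant measure on the Grassmannian, so the set of ``good'' rotations has full Haar measure in $SO(n)$; in particular every neighbourhood of $\id\in SO(n)$ contains such a $\theta$. Fixing one such $\theta$ also close enough to $\id$ to guarantee the desired $\cC^1$-closeness, we get
$$\cH^m(f_\eps(\Sigma\cap U(x,r)))\;\le\; L^m\,\cH^m\bigl(P_V(\theta(A))\bigr)\;=\;0,$$
where $L$ is a Lipschitz constant for $\psi^{-1}$ on the compact set containing the image, completing the proof.

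The only delicate point is the tension between $\cC^1$-closeness, which forces $\theta$ near $\id$, and the Besicovitch--Federer conclusion, which can fail at $\theta=\id$ if $V$ itself happens to be a bad plane for $A$. This is not a genuine obstacle because the set of bad rotations has Haar measure zero in $SO(n)$, hence good rotations are dense in every neighbourhood of $\id$; everything else is chain-rule bookkeeping together with the bi-Lipschitz invariance of pure unrectifiability.
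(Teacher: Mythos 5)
Your proof is correct and follows essentially the same route as the paper: locally factor $f$ via the constant rank theorem, insert a small rotation $\theta$ to obtain $f_\eps=f\circ\phi^{-1}\circ\theta\circ\phi$, use the identity $P_V\circ\theta=\theta\circ P_{\theta^{-1}(V)}$ together with bi-Lipschitz invariance of pure unrectifiability, and invoke Besicovitch--Federer to find a good $\theta$ near $\id$. The only cosmetic difference is that you argue that the bad rotations form a Haar-null subset of $SO(n)$, whereas the paper shows $B_{SO}(\id,\rho).V$ has positive $\gamma_{n,m}$-measure and intersects it with the full-measure set of good planes; these are equivalent via the pushforward of Haar measure onto the Grassmannian.
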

\noindent We precede the proof with a proposition stating that the unrectifiability property is preserved by a bi--Lipschitz diffeomorphism.
\begin{proposition}\label{purunrediff}
 Let $\Sigma\subset U$ be a purely $m$-unrectifiable set with $\cH^m(\Sigma)<\infty$ and $\phi:U\rightarrow V$ be a bi--Lipschitz diffeomorphism. Then 
 $\phi(\Sigma)$ is also an $m$-unrectifiable set.
\end{proposition}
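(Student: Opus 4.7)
The plan is a direct proof by contradiction that exploits the fact that bi-Lipschitz maps preserve both rectifiability and $\mathcal{H}^m$-null sets. Suppose $\phi(\Sigma)$ is not purely $m$-unrectifiable. Then, by definition, there exists an $m$-rectifiable set $E \subset \R^n$ with $\cH^m(\phi(\Sigma)\cap E)>0$. After replacing $E$ by $E\cap V$ (which is still $m$-rectifiable, as the defining Lipschitz images only shrink), we may assume $E \subset V$.

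The key intermediate claim is that $\phi^{-1}(E) \subset U$ is itself $m$-rectifiable. By definition there are Lipschitz maps $g_i : \R^m \to \R^n$ with $\cH^m\bigl(E \setminus \bigcup_i g_i(\R^m)\bigr) = 0$. I would handle the fact that $\phi^{-1}$ is defined only on $V$ by exhausting $V$ from inside by a countable family of closed sets $K_j \subset V$ on each of which $\phi^{-1}$ is Lipschitz; by McShane's extension theorem, $\phi^{-1}|_{K_j}$ extends to a Lipschitz map $\tilde\phi_j : \R^n \to \R^n$. Then the countable family $\{\tilde\phi_j \circ g_i\}_{i,j}$ consists of Lipschitz maps $\R^m \to \R^n$ whose images cover $\phi^{-1}(E)$ up to the set $\phi^{-1}\bigl(E \setminus \bigcup_i g_i(\R^m)\bigr)$, and this last set has zero $\cH^m$-measure because $\phi^{-1}$ is Lipschitz on each $K_j$ and Lipschitz maps send $\cH^m$-null sets to $\cH^m$-null sets.

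Given the claim, the contradiction is immediate. The set $\Sigma \cap \phi^{-1}(E)$ satisfies $\phi(\Sigma \cap \phi^{-1}(E)) = \phi(\Sigma) \cap E$ (using that $\phi$ is a bijection), so by the Lipschitz property of $\phi$ applied in the reverse direction we get $\cH^m(\Sigma \cap \phi^{-1}(E)) > 0$. But $\phi^{-1}(E)$ is $m$-rectifiable, contradicting pure $m$-unrectifiability of $\Sigma$. Hence $\phi(\Sigma)$ is purely $m$-unrectifiable.

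The only real technical point, and the one I would be most careful about, is the extension step in the middle paragraph: one must verify that $\phi^{-1}$ is honestly Lipschitz on suitable closed subsets of $V$ (here it suffices to note that \emph{bi-Lipschitz diffeomorphism} in the hypothesis gives a global Lipschitz constant for $\phi^{-1}$ on all of $V$, so no exhaustion is even required and we can simply extend $\phi^{-1}$ itself via McShane). Everything else is routine manipulation of the definitions.
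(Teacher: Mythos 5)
Your proof is correct and takes essentially the same approach as the paper's: reduce to the claim that $\phi^{-1}(E)$ is $m$-rectifiable for any $m$-rectifiable $E\subset V$, then apply pure unrectifiability of $\Sigma$ together with the Lipschitz bound $\cH^m(\phi(A))\leq \mathrm{Lip}(\phi)^m\cH^m(A)$. The only difference is that the paper simply asserts the rectifiability of $\phi^{-1}(E)$, whereas you justify it via McShane extension of $\phi^{-1}$ and the preservation of $\cH^m$-null sets under Lipschitz maps; your phrasing as a contradiction versus the paper's direct estimate is cosmetic.
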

\begin{proof}
The $m$-dimensional Hausdorff measure satisfies the inequality (see \cite[Theorem 7.5, p.103]{Mattila1})
\begin{equation}\label{mesoflip}
 \cH^m(l(A))\leq Lip(l)^m\cH^m(A),
\end{equation}
where $l\colon\R^n\rightarrow\R^n$ is a Lipschitz map and $A$ is a subset of $\R^n$.
Therefore, for  any $m$-rectifiable set $E\subset V$ we see that 
$$\cH^m(\phi(\Sigma)\cap E)\leq Lip(\phi^{-1})^m\cH^m(\Sigma\cap \phi^{-1}(E))=0,$$
where the last equality follows from the rectifiability of $\phi^{-1}(E)$.
\end{proof}
We are now in a good position to prove the lemma.
\begin{proof}[Proof of \cref{localver}.]
We divide the proof into three parts. First part contains the definition of the diffeomorphism
 $\Xi_\theta$ which composed with original map $f$ will be the mapping $f_\eps=f\circ\Xi_\theta$ form the conclusion of the theorem. In second we investigate the distance between $f$ and modified map. In the last part we check when the image $f\circ\Xi_\theta(\Sigma)$ has $m$-dimensional measure equal to zero. 
\begin{enumerate}[wide, labelwidth=!, labelindent=0pt]
 \item[1.]
Let $U_x$ be an open neighbourhood as in \cref{col1}.  Using a translation we can assume that $0\in \text{int}\left(\phi_x(U_x)\right)$. 
For every point $x\in U$ let $r_x=\text{dist}(0,\partial \phi_x(U_x))$. Then for any positive $r<r_x$ we have $\overline{B}(0,r)\subset\subset\phi_x(U_x)$. 
For any such $r$ we define
\begin{equation}
U(x,r)\coloneqq\phi_x^{-1}(B(0,r))\subset U_x.
\end{equation}
For simplicity denote $\phi\coloneqq\phi_x$. For an isometry $SO(n)\ni\theta:\R^n\rightarrow\R^n$ let $\Xi_\theta$ stand for the map

\begin{equation}\label{mapXi}
 \Xi_\theta= \left(\phi^{-1}\circ \theta\circ\phi\right):U(x,r)\rightarrow U(x,r).
\end{equation}
Observe that the further composition with $(\psi\circ f)$ yields a mapping from $U(x,r)$ to $\R^m$. We investigate the image of the set $\Sigma$ under this composition
\begin{equation}
 \begin{split}\label{observ}
  \psi\circ f\circ\Xi_\theta(\Sigma)&=(\psi\circ f\circ\phi^{-1})\circ \theta\circ\phi(\Sigma) \\
  &=(P_V\circ \theta)(\phi(\Sigma)),
 \end{split}
\end{equation}
for some $m$-dimensional plane $V$. We claim that $P_V\circ \theta=\theta\circ P_{\theta^{-1}(V)}$. Indeed, by computing the preimage of a point $v\in V$, we have 
\begin{equation}
 \begin{split}
  (P_V\circ \theta)^{-1}(v)&=\theta^{-1}( {P_V}^{-1}(v))\\
  &=\theta^{-1}(v+V^{\perp})\\
  &=\theta^{-1}(v)+\theta^{-1}(V^{\perp}),
 \end{split}
\end{equation}
while 
\begin{equation}
 \begin{split}
  (\theta\circ P_{\theta^{-1}(V)})^{-1}(v)&=(P_{\theta^{-1}(V)})^{-1}(\theta^{-1}(v))\\
 &=\theta^{-1}(v)+\theta^{-1}(V)^\perp \\
 &=\theta^{-1}(v)+\theta^{-1}(V^\perp).
 \end{split} 
\end{equation}

Therefore, we obtained a projection of the unrectifiable set $\phi(\Sigma)$ onto a subspace $\theta^{-1}(V)$. We will choose symmetry $\theta$
is such a way that it is both close to identity mapping and the projection onto the $m$-plane $\theta^{-1}(V)$ zeroes the measure of $\phi(\Sigma)$.
\item[2.] 
We will prove that there exist an open neighbourhood of $\id$ in $SO(n)$ such that the composition 
\begin{equation}
 f\circ\Xi_\theta:U(x,r)\rightarrow \R^n
\end{equation}
can be as close to the mapping $f\vert_{U(x,r)}$ as we wish.
Observe that both $f$ and $Df$ are continuous up to the boundary of $U(x,r)$ for $r<r_x$, hence, they are uniformly continuous. For the, differential $Df$ define the continuity parameters $\tilde{\eps}$ and $\tilde{\delta}$ by the following statement: for any positive $\tilde{\eps}$ there exists $\tilde{\delta}=\tilde{\delta}(\tilde{\eps})$, such that if $|y-x|\leq \tilde{\delta}$, then $\|Df(y)-Df(x)\|\leq \tilde{\eps}$. We estimate
\begin{equation}\label{stupid}
\begin{split}
 \|f\circ\Xi_\theta - f\|_{\cC^1}&=\|f\circ\Xi_\theta- f\|_{\cC^0}+\|Df(\Xi_\theta).D\Xi-Df\|_{\cC^0} \\
 &\leq\|f\|_{\cC^1}\|\Xi_\theta-\id\|_{\cC^0}+\|Df(\Xi_\theta).(D\Xi-\id+\id)-Df\|_{\cC^0} \\
 &\leq\|f\|_{\cC^1}\|\Xi_\theta-\id\|_{\cC^0}+\|Df(\Xi_\theta).(D\Xi-\id)\|_{\cC^0}+\|Df(\Xi_\theta)-Df\|_{\cC^0} \\
 &\leq\|f\|_{\cC^1}\|\Xi_\theta-\id\|_{\cC^0}+\|Df\|_{\cC^0}\|(D\Xi-\id)\|_{\cC^0} +\|Df(\Xi_\theta)-Df\|_{\cC^0} \\
 &\leq C(\|f\|_{\cC^1})\|\Xi_\theta-\id\|_{\cC^1}+\tilde{\eps}
\end{split}
\end{equation}
for $\|\Xi_\theta-\id\|_{\cC^0}<\tilde{\delta}(\tilde{\eps})$. Observe, that if we pick $\tilde{\eps}<\eps/2$ and $\|\Xi_\theta-\id\|_{\cC^0}<\min(\tilde{\delta},\eps/2)$ then the left hand side of \eqref{stupid} is less than $\eps$. 
Therefore, we need to control the $\cC^1$ norm of $(\Xi_\theta - \id)$. Similarly to \eqref{stupid} one can show that also
$\|\Xi_\theta-\id\|_{\cC^1(U(x,r))}$ is continuous in the argument $\theta$. This provides that for any positive $\eps$ we can find an open ball in $SO(n)$
$$ B_{SO}(\id,\rho)\coloneqq\lbrace\theta\:\vert\:\|\theta-\id\|_{C^1}<\rho\rbrace,$$ 
such that
$$
\|f\circ\Xi_\theta - \id\|_{\cC^1}<\eps\quad \text{for}\quad \theta\in B_{SO}(\id,\rho).
$$
\begin{remark}
 Let $V$ be an $m$-plane in $\R^n $ which is an element of the Grassmann manifold $Gr(n,m)$. Then 
 $$B_{SO}(\id,\rho).V=\lbrace\theta(V)\in Gr(n,m)\:|\:\theta\in B_{SO}(\id,\rho)\rbrace$$
 is the set of the positive $\gamma_{n,m}-$measure. Indeed, let $\nu$ be the uniformly distributed measure on the orthogonal group $O(n)$. 
 From the definition of $\gamma_{n,m}$ we have 
 \begin{equation}
  \begin{split}
   \gamma_{n,m}(B_{SO}(\id,\rho).V)&\overset{def}{=}\nu(\lbrace g\in O(n)\:\vert\: g.V\in B_{SO}(\id,\rho).V \rbrace) \\
   &=\nu(B_{SO}(\id,\rho))>0,
  \end{split}
 \end{equation}
\end{remark}
because $B_{SO}(\id,\rho)$ is an open set also in $O(n)$.

\item[3.]
From Besicovitch-Federer theorem \cite[Theorem 18.1]{Mattila1} in the open set $B_{SO}(\id,\rho).V$ we can find the set $Z_{\phi(\Sigma)}$ consisting of 
those $m$-planes for which 
$$\cH^m(P_{\theta^{-1}(V)}(\phi(\Sigma)))=0\quad \text{for}\quad\theta^{-1}(V)\in Z_{\phi(\Sigma)}$$ 
and such that $\gamma_{n,m}(B_{SO}(\id,\rho).V)=\gamma_{n,m}(Z_{\phi(\Sigma)})$. 
Under the mappings $\psi$ and $\theta$ a set of zero $\cH^m-$measure 
is mapped to a set of zero measure, hence we have 
\begin{equation}
\begin{split}
 0=\cH^m(P_{\theta^{-1}(V)}(\phi(\Sigma)))&=\cH^m(\theta\circ P_{\theta^{-1}(V)}(\phi(\Sigma))) \\
 \text{by }\eqref{observ}\quad&=\cH^m(\psi\circ f\circ\phi^{-1}\circ \theta\circ\phi(\Sigma)) \\
 &=\cH^m(f\circ\Xi_\theta(\Sigma)).
\end{split}
\end{equation}
\end{enumerate}
\end{proof}

\section{The $\cC^1_{=m}$ Besicovitch-Federer Theorem}\label{sec3}

\subsection{Main theorem}
The main theorem states, in slightly other words, that for any $\cC^1$ neighbourhood of a function $f\in\cC^1_{=m}(U,\R^n)$ there exist perturbation $f_\eps$ such that an $f_\eps(\Sigma)$ has zero measure. 
\main
\begin{proof}
We will prove that for every $\eps>0$ and $f\in\cA(\Sigma)$ there exist $f_\eps\in \cC^1_{=m}$ such that $f_\eps\notin\cA(\Sigma)$ and $\|f-f_\eps\|_{\cC^1(U)}\leq\eps$, which will imply that the interior of 
$\cA(\Sigma)$ is empty.
Function $f_\eps$ will be the limit of a sequence $f_n$ contained in an $\eps$-neighbourhood of $f$ and such that $\cH^n(f_n(\Sigma))\overset{n\rightarrow\infty}{\longrightarrow} 0$.

From \cref{localver} we already know that locally \cref{thm:main} is true.  The problem is that diffeomorphisms $\Xi_\theta$ do not agree on overlapping sets $U(x,r)$ and $U(\tilde{x},\tilde{r})$.  Thus we are forced to work on disjoint sets. Additionally diffeomorphisms $\Xi_\theta$ acts non-trivially on the boundary of $U(x,r)$, hence those sets have to be not only disjoint but also have positive distance between each other. 

The following lemma enables us to prolong a mapping $\Xi_\theta$ from the \cref{localver} to a larger set. 
For a subset $\cO$ contained in $U(x,r)$, provided that is is far enough from the boundary of $U(x,r)$, we will find a diffeomorphism 
which acts like $\Xi_\theta$ on $\cO$ and equals to identity outside some neighbourhood of the set $\cO$.
\begin{lemma}\label{keylemma}
If a compact set $U$ is $\cC^1-$diffeomorphic to a closed ball $\varphi\colon\overline{U}\rightarrow \overline{B}(0,r)$ and 
\begin{enumerate}[label=(\alph*)]
 \item $\cO\subset U\setminus B_\mu(\partial U),$ for some $\mu>0$;
 \item $\eta>0$,
\end{enumerate}
then 
there exists $\rho>0$ such that for any $\theta\in B_{SO}(\id,\rho)\setminus\lbrace\id\rbrace$ there exists a
diffeomorphism $$\zeta\coloneqq\zeta(\mu,\eta,\theta)\colon U\rightarrow U$$ with the following properties:
\begin{enumerate}[label=(\roman*)]
 \item $\zeta(B_{\mu/4}(\cO))\subseteq B_{\mu/2}(\cO)$ and $\zeta(y)=\Xi_\theta(y)$ for $y\in B_{\mu/4}(\cO)$;
 \item $\zeta(y)=y$ for $y\in U\setminus B_{3\mu/4}(\cO)$;
 \item $\|\zeta -\id\|_{\cC^1(U)} \leq \eta $. \label{lab3}
\end{enumerate}
\end{lemma}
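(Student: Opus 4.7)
The plan is to interpolate multiplicatively between $\Xi_\theta$ and the identity by means of a scalar cutoff, exploiting the fact---already established in step 2 of the proof of \cref{localver}---that $\Xi_\theta$ is $\cC^1(U)$-close to $\id$ whenever $\theta\in SO(n)$ is $\cC^1$-close to $\id$. Concretely, I fix once and for all a smooth function $\chi\in\cC^\infty(U,[0,1])$ equal to $1$ on $B_{\mu/4}(\cO)$ and vanishing outside $B_{3\mu/4}(\cO)$, with some a priori bound $\|\chi\|_{\cC^1(U)}\le K(\mu)$; such a $\chi$ exists because $B_{3\mu/4}(\cO)$ is compactly contained in the interior of $U$ by assumption (a). I then set
\[
\zeta(y)\coloneqq y+\chi(y)\bigl(\Xi_\theta(y)-y\bigr).
\]

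Properties (i) and (ii) are forced by the construction: on $B_{\mu/4}(\cO)$ where $\chi\equiv 1$ one has $\zeta=\Xi_\theta$, while on $U\setminus B_{3\mu/4}(\cO)$ where $\chi\equiv 0$ one has $\zeta=\id$. The remaining items are arranged by choosing $\rho>0$ so small that, for every $\theta\in B_{SO}(\id,\rho)$,
\[
\|\Xi_\theta-\id\|_{\cC^1(U)}<\min\!\left(\frac{\eta}{2K(\mu)},\,\frac{\mu}{4},\,\frac{1}{2}\right).
\]
The first term in the minimum combined with the product rule yields $\|\zeta-\id\|_{\cC^1(U)}\le\eta$, which is property \ref{lab3}. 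The second gives $|\zeta(y)-y|<\mu/4$ for $y\in B_{\mu/4}(\cO)$, so $\zeta(y)\in B_{\mu/2}(\cO)$, completing (i).

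The main obstacle, and the only place where the hypothesis that $U$ is diffeomorphic to a closed ball is actually used, is to verify that $\zeta$ is genuinely a diffeomorphism of $U$. The third term in the minimum makes $\|D(\chi v)\|_{\cC^0}<1/2$ with $v\coloneqq\Xi_\theta-\id$, so $D\zeta=I+D(\chi v)$ is everywhere invertible and the mean value inequality yields $|\zeta(x)-\zeta(y)|\ge\tfrac{1}{2}|x-y|$, giving injectivity. Since $\chi$ is supported in a compact subset of $\mathrm{int}(U)$, the map $\zeta$ extends by the identity to a $\cC^1$ self-map $\widetilde\zeta$ of $\R^n$ that differs from $\id_{\R^n}$ only on a compact set; the same perturbation estimate makes $\widetilde\zeta$ a proper local diffeomorphism of $\R^n$, and hence a diffeomorphism by the simple connectedness of $\R^n$. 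Because $\widetilde\zeta$ fixes $\partial U$ pointwise and preserves the bounded component of its complement, it restricts to the desired diffeomorphism of $U$.
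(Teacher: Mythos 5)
Your construction is correct, but it is genuinely different from the paper's. You interpolate \emph{convexly} between $\Xi_\theta$ and $\id$ via a scalar cutoff, setting $\zeta = \id + \chi\,(\Xi_\theta - \id)$, and then you must separately verify that this linear interpolation is a diffeomorphism, which you do by the standard small-perturbation-of-the-identity argument (uniform invertibility of $D\zeta$, injectivity from the mean-value inequality, surjectivity from properness plus connectedness of $\R^n$ after extending by $\id$). The paper instead interpolates \emph{along the flow}: it writes $\theta = \exp(X_\theta)$, pulls the corresponding vector field $X$ back to $U$ by $\varphi$ to get a field $V_\theta$ whose time-one flow is $\Xi_\theta$, multiplies $V_\theta$ by a mollified indicator to obtain a compactly supported field $W_\theta$, and takes $\zeta$ to be the time-one map of $W_\theta$. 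The chief advantage of the flow route is that $\zeta$ is \emph{automatically} a diffeomorphism (time-one map of a compactly supported Lipschitz vector field), so no perturbation estimate is needed for that; the price is the Lie-theoretic scaffolding (exponential map, cut locus, generated flow) and a mild check that trajectories starting in $B_{\mu/4}(\cO)$ stay in $B_{\mu/2}(\cO)$ so that the flow of $W_\theta$ agrees there with that of $V_\theta$. Your route is more elementary and self-contained, trading the flow machinery for the by-now-standard fact that a $\cC^1$-small perturbation of the identity with support in a compact subset of the interior is a self-diffeomorphism fixing the complement; note that the final restriction-to-$U$ step does not really need Jordan–Brouwer or simple connectedness --- since the extension $\widetilde\zeta$ equals $\id$ on $\R^n\setminus\mathrm{int}(U)$ and is a bijection of $\R^n$, it must send $\mathrm{int}(U)$ onto $\mathrm{int}(U)$ by pure set theory. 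Both proofs rely on the same input from the local variant, namely that $\theta\mapsto\Xi_\theta$ is continuous in $\cC^1$ and equals $\id$ at $\theta=\id$, which is what supplies $\rho$.
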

\begin{proof}
For an element $\theta\in SO(n)$ sufficiently close to $\id$ (before cut locus) there exists the unique path 
$$\exp(t\:X_\theta)=\theta_t:\R\rightarrow SO(n)\quad \text{for}\quad X_\theta\in\mathfrak{so}(n)$$
connecting $\id$ and $\theta$ that is $\theta_1=\theta$, $\theta_0=\id$.
Mapping $t\mapsto \exp(t\:X_\theta)$ is a group endomorphism $\R\rightarrow\text{Diff}(B(0,r))$, in other words it is \emph{a flow}, and induces a vector field 
$$X(b)=\frac{d}{dt}\exp(t\:X_\theta)(b)\vert_{t=0}\quad\text{for } b\in B(0,r).$$
The aim now is to produce a vector field $V_\theta\in\Gamma(TU)$ such that its vector flow gives the mapping $\Xi_\theta(y)=\varphi_{V_\theta}(1,y)$.
Notice that the field $X$ can be transported back to the set $U$ via diffeomorphism $\phi$ or again using the fact that the
flow $\Xi_{\theta_t}:\R\times U\rightarrow U$ generates a vector field 
\begin{align}
 V_\theta(y)\coloneqq\frac{d}{dt}\Xi_{\theta_t}(y)\vert_{t=0} &= \frac{d}{dt}\phi^{-1}\circ\theta_t\circ\phi(y)\vert_{t=0} \nonumber\\
 &=D\phi^{-1}(\theta_0\circ\phi(y))\frac{d}{dt}\theta_t(\phi(y))\vert_{t=0}\nonumber\\
 &=D\phi^{-1}(\phi(y)).X(\phi(y))\nonumber\\
 &={\phi^{-1}}_\ast(\phi^\ast X)(y)\qquad\text{for }y\in U.
\end{align}
Multiplying the field $V_{\theta}$ by a smooth cutoff function of an appropriate neighbourhood of the set $\cO$ we obtain a flow that at small enough time will 
do exactly what was formulated in the points $(i)$ to $(iii)$.
Let  
 \begin{equation}
  W_\theta(y)\coloneqq (\vartheta_{\mu/8}\ast\mathds{1}_{B_{5\mu/8}(\cO)})(y)\cdot V_{\theta}(y).
 \end{equation}
 By $\varphi_W{_\theta}(t,y)$ we denote the trajectory of the point $y$ at the time $t$ under the flow of the field $W_\theta$. 
 Since for every point $y$ in $\mu/2$-neighbourhood of the set $\cO$, denoted $B_{\mu/2}(\cO)$, the support of the mollifier $\vartheta_{\mu/8}(\cdot-y)$ is contained 
 in $(\mu/2+\mu/8)$-neighbourhood of $\cO$, the field $W_\theta$ is equal to $V_\theta$ on $B_{\mu/2}(\cO)$.
 Therefore, there exists $t$ such that the flow $\varphi_{W_\theta}(s,y)=\Xi_{\theta_s}(y)$ for $y\in B_{\mu/4}(\cO)$ and $s\leq t$. Hence gives the proof of the point $(i)$.
 
  The field $W_\theta$ is equal to zero outside $\spt (\vartheta_{\mu/8}\ast\mathds{1}_{B_{5\mu/8}(\cO)}) \subseteq B_{3\mu/4}(\cO)$, hence for 
 $y\in B_{3\mu/4}(\cO)^c$ we have $\varphi_W(t,y)=y$ and the point $(ii)$.
 
 Field $W_\theta$ is Lipschitz on compact set $\overline{U}$ and equal to identity on the neighbourhood of the boundary, and generates one-parameter group of diffeomorphisms 
 that exists for times $t\in(-\eps,\eps)$ for some positive $\eps$. Differentiable dependence on initial conditions guaranties that we will find time $t$ so small that \ref{lab3} is satisfied for all $s<t$.
 
 So far for some fixed $\theta$ we found $t$ such that $\theta_t$ fulfils $(i),(ii)$ and $(iii)$. Observe that the mapping 
\begin{align}
 SO(n)&\longrightarrow \text{Diff}_{\cC^1}(U) \nonumber\\
 \theta&\mapsto \varphi_{W_\theta}(1,\cdot) \nonumber
\end{align}
is continuous, hence $\theta\mapsto \|\varphi_{W_\theta}(1,\cdot)-\id\|_{\cC^1}$ is also continuous. 
Therefore, there exist and open neighbourhood of $\id$ such that properties $(i),(ii)$ and $(iii)$ are satisfied.
%
 \end{proof}
\emph{Outline of the proof of \cref{thm:main}}.  First we shall construct a countable family $\mathfrak{U}$ of pairwise disjoint connected open sets $\left\lbrace U_{i}\right\rbrace$, such that every $U_i$ is contained in some domain 
$U(x,r)=U(x(i),r(i))$. The set $U$ will be covered by the sum of closures of $U_i$  such that the measure $\cH^m(\Sigma\cap\partial U_i)=0$ for every $i$.

Consider an element $U_1$ of the family $\mathfrak{U}$. By \cref{keylemma} there exists a diffeomorphism $\zeta_{1,1}$ such that on the set 
$\cO\coloneqq U_1\setminus B_\mu(\partial U_1)$ it is equal to $\Xi_\theta$. The map $\Xi_\theta$ is chosen, using lemma \cref{localver}, in such a way that the $m$-measure of the image of $\cO\cap\Sigma$ under $f\circ\zeta_{1,1}$ is zero, i.e. $\cH^{m}(f\circ\Xi_{\theta}(\cO\cap\Sigma))=0$.
In the set $\zeta_{1,1}(U_1)$ the only possible place for positive $\cH^m$ measure of $f\circ\zeta_{1,1}(\Sigma)$ is outside $\zeta_{1,1}(\cO)$. Define this set by $U_{1,2}\coloneqq U_1\setminus\zeta_{1,1}(\cO)$.  
The idea is to repeat the above procedure:  apply \cref{keylemma} to the set $U_{1,2}\setminus B_{\mu_2}(\partial U_{1,2})$.
This yields a diffeomorphism $\zeta_{1,2}$ which slightly moves the set that is in $U_{1,2}$ and further than $\mu_2$ away from the boundary of $U_{1,2}$.

The sequence $f_n$ appears as the composition 
\begin{align*}
 f_1&=f\circ\zeta_{1,1}\\
 f_2&=f\circ\zeta_{1,2}\circ\zeta_{1,1} \\
    &\vdots\\
 f_n&=f\circ\zeta_{1,n}\circ\ldots\circ\zeta_{1,1}.
\end{align*}
It will turn out that this sequence is a finite composition for almost all points $u\in U_1$.
We have to check that this sequence is converging to some map $f_\eps$. 
Here we end the outline and start the proof with a construction of the cover $\mathfrak{U}$. 

 1.$\quad$ Construction of the family $\mathfrak{U}$.\newline From now on let the family
 \begin{equation}\label{prefamily}
  \widetilde{\mathfrak{B}}\subset\lbrace U(x,r)\rbrace_{x\in U,0<r<r_x}
 \end{equation}
 stand \emph{only} for those sets introduced in \cref{localver} that fulfil property $\cH^m(\Sigma\cap\partial U(x,r))=0$. 
 Note that for every $x$ in $U$ at most countable set of radii $r\in(0,r_x)$ does not have the above property. From this family choose a countable collection $\mathfrak{B}\subset\widetilde{\mathfrak{B}}$,
 covering $U$ and denote them $\mathfrak{B}=(V_i)_{i\in\bN}$. Consider the following collection of open sets 
 \begin{equation}
\widetilde{\mathfrak{U}}=\left\lbrace \into{V_1},\:\into{V_2}\setminus \overline{V_1},\:\into{V_3}\setminus\overline{(V_1\cup V_2)},\:\ldots,\into{V_n}\setminus\overline{\bigcup\limits_{i<n}V_i},\:\ldots\right\rbrace.
 \end{equation}
 Finally define $\mathfrak{U}$ as the family of connected components of $\widetilde{\mathfrak{U}}$. Denote elements of $\mathfrak{U}$ by $(U_i)_{i\in\bN}$.
 \begin{example}
  On the plane, let $V_1=B(0,1)$, $V_2=(-2,2)\times(-1,1)$ and $\mathfrak{B}=\lbrace V_1, V_2\rbrace$. In this case $\widetilde{\mathfrak{U}}$ consists of two sets: the ball $B(0,1)$ and the rectangle $(-1,1)\times(-1,1)$ 
  with ball cut out. Then family $\mathfrak{U}$ has three elements: the ball, left remains from rectangle after cutting out the ball and symmetric right part. 
 \end{example}
 Already $\into{V_2}\setminus \overline{V_1}$ can consist of countably many connected components. The support of the measure $\cH^m\mres\Sigma$ is contained in sum of sets from $\mathfrak{U}$ because of the condition
 imposed on \eqref{prefamily}. Since  
 \begin{equation}
  \forall_{j\in\bN}\:\exists_{n\in\bN}\quad U_j\overset{conn}{\subset}\into{V_n}\setminus\bigcup\limits_{i<n}V_i,
 \end{equation}
the boundary of $U_i$ can be divided into finite sum
\begin{equation}
 \forall_{j\in\bN}\:\exists_{n\in\bN} \quad \partial U_j\subset \partial V_n\cup\partial V_{n-1}\cup\ldots\cup \partial V_1.
\end{equation}
Note that the $\partial U_j\cap \partial V_i$ can have infinitely many connected components.

 2.$\quad$ $\cH^m\mres\Sigma$-measure of collars around the boundaries of elements in $\mathfrak{U}$.\newline
 For a continuously differentiable, compact $(n-1)$ dimensional manifold $\partial V_k$ the limit
 \begin{equation}
  \cH^m(\Sigma \cap B_\mu(\partial V_k))\longrightarrow 0 \quad \text{as}\quad\mu\rightarrow 0.
 \end{equation}
Conversely, assume $\lim\limits_{\mu\rightarrow 0}\cH^m(\Sigma \cap B_\mu(\partial V_k))> 0$. 
Then since $\Sigma$ has finite measure we have 
$$\cH^m(\Sigma\cap \partial V_k)=\cH^m(\Sigma\cap \bigcap\limits_{\mu>0} B_\mu(\partial V_k))
=
\lim\limits_{\mu\rightarrow 0}\cH^m(\Sigma \cap B_\mu(\partial V_k))> 0$$ 
which contradicts assumption on \eqref{prefamily}.

 Fix $U_i\in\mathfrak{U}$ and let $$\sigma_i=\cH^m(\Sigma\cap U_i).$$ Since $\partial U_i \subset \bigcup_{k<n} \partial V_k $ for some $n=n(i)$ then 
\begin{equation}\label{collarmeasure}
 \cH^m\mres\Sigma(B_\mu(\partial U_i))\leq\sum\limits_{k=1}^n\cH^m\mres\Sigma (B_\mu(V_k))\longrightarrow 0\quad \text{as}\quad \mu\rightarrow 0.
\end{equation}
Therefore there exist $\:\mu_{i,1}>0\:$ such that
\begin{equation}\label{mu:bdyass}
\quad\cH^m\mres(U_i\cap\Sigma)\left(\vphantom{\int} B_{\mu_{i,1}}(\partial U_i)\right)<\sigma_i/3
\quad\text{and}\quad\cH^m\mres(U_i\cap\Sigma)\left(\vphantom{\int}\partial(B_{\mu_{i,1}}(\partial U_i))\right)=0.
\end{equation} 
We have second assertion because $\left\lbrace\partial B_{s}(\partial U_{i})\right\rbrace_{s\in[0,1]}$ is an uncountable, pairwise disjoint family of sets which are $(n-1)$-rectifiable for almost all $s$. 

3.$\quad$ Using the \cref{keylemma}.\newline
We need to control the $\cH^m\mres\Sigma$-measure in the collars around the boundary, there we do not have property $\cH^m(f\circ\zeta(\Sigma))=0$. 
Since $f$ is $\cC^1$ up to the boundary of $V_i$, without loss of generality, we can assume that $\text{Lip}(f)\leq 1$ and bound the measure of $\Sigma$ in the domain
instead of the measure of the image. 
Let 
\begin{equation}\label{epsass}
 \eps=\sum\limits_{k=1}^\infty\eps_k \quad \text{with }\eps_k>0
\end{equation}
Apply \cref{keylemma} to the set $\cO_i=U_i\setminus B_{\mu_{i,1}}(\partial U_i)$ for fixed $U_i\in\mathfrak{U}$, with a positive 
\begin{equation}\label{conditioneta}
 \eta<\eps_1/3
\end{equation}
and $t$ in vector flow $\varphi_{W_\theta}(t,\cdot)=\zeta$  so small that 
\begin{equation}
 \cH^m\mres\zeta(\Sigma)\left(\vphantom{\int}\zeta( U_i\cap B_{\mu_{i,1}}(\partial U_i))\right)<\sigma_i/2.
\end{equation}
Note that last step may increase the measure of $\zeta(\Sigma)$ in the deformed collar around boundary of the set $U_i$ compared to \eqref{mu:bdyass}. 
Due to the continuity of function $t\mapsto \cH^m(\varphi_{W_\theta}(t,\Sigma))$, which follows from \eqref{mesoflip}, it is possible to bound 
this increase by $\sigma_i/2$.
In the \cref{keylemma} we deform a set $U$ diffeomorphic to some ball, here we know that $U_i$ is contained in some set $V_j$ which is diffeomorphic to a ball, 
see \eqref{prefamily}.
Due to \cref{localver} we can choose a map $\theta$ such that $\zeta$ is equal to $\Xi_\theta$ on the set $\cO_i$ and such that    
\begin{equation}
 \cH^m\left(\vphantom{\int}f\circ\zeta(\Sigma\cap \cO_i )\right)=0.
\end{equation}
As a result the measure of $\Sigma$ vanished in the set that is away from boundary of $U_i$.

4.$\quad$ Iteration\newline 
Now we will iterate above construction. 
 We are working with one element of the family $\mathfrak{U}$. 
 Denote $U_{1}\coloneqq U_i$, $\cO_{1}\coloneqq\cO_i$, $\mu_1\coloneqq\mu_{i,1}$ and also denote by $\zeta_{1}$ mapping $\zeta$ obtained in the previous point. 
Let  
\begin{align}
 U_{2}&=U_{1}\cap\zeta_{1}\left(
 B_{\mu_{1}}(\partial U_{1})\right) \\
\text{or shortly}\quad &=U_{1}\setminus\zeta_{1}(\cO_{1}).
\end{align}
This is the only possible place where, after composition with $f$, set $\zeta_{1}(\Sigma)$ yields positive measure.
In the previous step the boundary of $U_1$ was $(n-1)-$rectifiable and we clearly had \eqref{collarmeasure}. 
Notice that the set changed, we are working now with $\zeta_{1}(\Sigma)$ inside a little collar around boundary of $U_{1}$ distorted by $\zeta_1$.
Again we use fact that boundary of $B_\mu(A)$ is rectifiable for any $A$ and almost all $\mu$.
Hence we can choose $\mu_{2}$ such that 
\begin{align}
 \cH^m\mres(U_{2}\cap\zeta_{1}(\Sigma))&\left(\vphantom{\int}\partial(B_{\mu_{2}}(\partial U_{2}))\right)=0\quad\text{and} \\
 \cH^m\mres(U_{2}\cap\zeta_{1}(\Sigma))&\left(\vphantom{\int} B_{\mu_{2}}(\partial U_{2})\right)<\sigma/3^2
\end{align}
 Apply lemma \cref{keylemma} to the set $\cO_{2}= U_{2}\setminus B_{\mu_{2}}\left(\partial U_{2}\right)$
with $\eta<\eps_2/3$ and $t$ in the flow such that 
 \begin{equation}
  \cH^m\left(\vphantom{\int}\zeta_{2}(\zeta_{1}(\Sigma)\cap U_{2}\cap B_{\mu_{2}}(\partial U_{2}))\right)=\cH^m\left(\vphantom{\int}\zeta_{2}(\zeta_{1}(\Sigma)\cap U_{2}\setminus\cO_{2})\right)<\sigma/2^2.
 \end{equation}
 We obtained the map $f_2=f\circ\zeta_{2}\circ\zeta_{1}$ with $\cH^m(f_2(\Sigma\cap U_1))\leq \sigma/2^2$ as we assumed $\text{Lip}(f)\leq 1$. 
 Moreover
\begin{align}
 \|\zeta_{2}\circ\zeta_{1}-\id\|_{\cC^1(U_1)}&=\|\zeta_{2}\circ\zeta_{1}-\zeta_{1}+\zeta_{1}-\id\| \nonumber\\
 &\leq\|\zeta_{2}\circ\zeta_{1}-\zeta_{1}\|+\|\zeta_{1}-\id\| \nonumber\\
 &\leq \eps_2 +\eps_1.
\end{align}
If $f,g:U\rightarrow U$ are two diffeomorphisms $\eps_f$ and $\eps_g$ close to identity in $\cC^1$ respectively, then
\begin{align}\label{diffineq}
 \|f\circ g - g \|_{\cC^1}&=\|f\circ g -g\|_{\cC^0}+\|D(f\circ g)-D(g)\|_{\cC^0} \nonumber \\
 &= \|f-\id\|_{\cC^0} +\|Df(g).Dg-D(g)\|_{\cC^0} \nonumber \\
 &\leq \eps_f + \|Df(g)-\id\|_{\cC^0} \|Dg\|_{\cC^0}\quad \text{for}\quad \|Dg\|\in 1\pm\eps_g \nonumber \\
 &\leq \eps_f + \eps_f(1+\eps_g)
\end{align}
and if one take $\eps_f <\eps/3$ and $\eps_g<1/3$ then $\|f\circ g - g \|_{\cC^1}<\eps$. This is the reason for \eqref{conditioneta}.

5.$\quad$ The $n^{\text{th}}-$step.
First we define  $U_{n}=U_{n-1}\setminus\zeta_{n-1}(\cO_{n-1})$. Take a collar $$\text{col}_{\mu_{n}}(U_{n})=U_{n}\cap B_{\mu_{n}}(\partial U_{n})$$ around boundary of $U_{n}$ 
of width $\mu_{n}$. Since boundary of $U_{n}$ and the boundary of the collar 
are rectifiable sets for almost all $\mu_{n}$, we can take $\mu_{n}$ such that
\begin{align}
\cH^m\mres\left(
\zeta_{n-1}\circ\ldots\circ\zeta_{1}(\Sigma)\right)&\left(\partial\:\text{col}_{\mu_{n}}(U_{n})\right)=0\quad \text{and}\\
\cH^m\mres\left(
\zeta_{n-1}\circ\ldots\circ\zeta_{1}(\Sigma)\right)&\left(\text{col}_{\mu_{n}}(U_{n})\right)<\sigma/3^n.
\end{align}
Use the \cref{keylemma} to the set $\cO_{n}=U_{n}\setminus \text{col}_{\mu_{n}}(U_{n})$ with $\eta<\eps_n/3$ and $t$ in the flow so small that 
\begin{equation}
 \cH^m\left(\vphantom{\int}\zeta_{n}\left(\vphantom{{1^1}^1}\zeta_{n-1}\circ\ldots\circ\zeta_{1}(\Sigma)\cap\text{col}_{n}(U_{n})\right)\right)\leq\sigma/2^n
\end{equation}
We estimate the distance of the above composition to identity map.
\begin{align}\label{disttoid}
 \|\zeta_{n}\circ\ldots\circ\zeta_{1}-\id\|_{\cC^1}&=\|\zeta_{n}\circ\ldots\circ\zeta_{1}+(-\zeta_{n-1}\circ\ldots\circ\zeta_{1}+\zeta_{n-1}\circ\ldots\circ\zeta_{1})+ \nonumber\\
 &\quad+(-\zeta_{n-2}\circ\ldots\circ\zeta_{1}+\zeta_{n-2}\circ\ldots\circ\zeta_{1})+\ldots+(-\zeta_{1}+\zeta_{1})-\id \|\nonumber\\
 &\leq \|\zeta_{n}\circ\ldots\circ\zeta_{1}-\zeta_{n-1}\circ\ldots\circ\zeta_{1}\| \nonumber\\
 &\quad+\|\zeta_{n-1}\circ\ldots\circ\zeta_{1}-\zeta_{n-2}\circ\ldots\circ\zeta_{1}\| \nonumber\\
 &\qquad\vdots \\
 &\quad+\|\zeta_{2}\circ\zeta_{1}-\zeta_{1}\| \nonumber\\
 &\quad+\|\zeta_{1}-\id\| \nonumber\\
 \text{from }\eqref{diffineq}\qquad&\leq \sum\limits_{i=1}^n\eps_i<\eps. \nonumber
 \end{align}

 6.$\quad$ Mapping in the limit.\newline 
 Let 
 \begin{equation}
 \zeta\colon U\longrightarrow \R^n \quad\text{be defined by}\quad \zeta(u)=\lim_{n\rightarrow\infty}\zeta_{n}\circ\ldots\circ\zeta_1(u)
 \end{equation}
 Space $\cC^1(U,\R^n)$ is a Banach space, if the sequence $\zeta_n$ is a Cauchy sequence then it will imply that $\zeta\in\cC^1(U,\R^n)$.
 Let $\zeta_n^\circ\coloneqq \zeta_n\circ\ldots\circ\zeta_1$. Almost the same as in \eqref{disttoid}, for any $\varrho>0$
 \begin{align}
  \|\zeta_n^\circ-\zeta_m^\circ\|_{\cC^1}&=\|\zeta_n^\circ-\zeta_{n-1}^\circ+\zeta_{n-1}^\circ-\ldots-\zeta_{m+1}^\circ+\zeta_{m+1}^\circ-\zeta_m^\circ\| \\
  &\leq \|\zeta_n^\circ-\zeta_{n-1}^\circ\|+\|\zeta_{n-1}^\circ-\zeta_{n-2}^\circ\|+\ldots+\|\zeta_{m+1}^\circ-\zeta_m^\circ\| \\
  \text{from }\eqref{diffineq}\quad &\leq\eps_n+\eps_{n-1}+\ldots+\eps_{m+1}+\eps_m 
 \end{align}
 Since the series $\sum_k\eps_k$ is convergent, one can find an integer $N$ such that for $n,m>N$ the condition $$ \|\zeta_n^\circ-\zeta_m^\circ\|_{\cC^1}\leq\varrho$$ is satisfied, proving that 
 $\zeta\in\cC^1(U,\R^n)$. Notice that $D\zeta =\id$ on the boundary because all $D\zeta_n$ are equal to the identity on the boundary.

 7.$\quad$Construction for the whole set $U$.\newline
 Having a construction on one element of family $\mathfrak{U}$ we easily produce the sequence for the whole domain $U$, because on every element of the cover $\mathfrak{U}$ the modification can be done separately. From now on $U_i$ denote elements of the family $\mathfrak{U}$ again
 and $\zeta_{i,n}$ is the mapping at the $n^{\text{th}}-$step in the element $U_i$. The mapping $\zeta_{i,n}$ can be extended with $\id$ to $U\setminus U_i$. 
 Define the sequence $(\xi_n)_{n=1,2,\ldots}$ by
 \begin{align}
 \xi_1&=\zeta_{1,1} \nonumber\\
 \xi_2&=\zeta_{1,2}\circ\zeta_{1,1}\circ\zeta_{2,1} \nonumber\\
 \xi_3&=\zeta_{1,3}\circ\zeta_{1,2}\circ\zeta_{1,1}\circ\zeta_{2,1}\circ\zeta_{1,1}\circ\zeta_{3,1} \nonumber\\
 \xi_4&=\zeta_{1,4}^\circ\circ\zeta_{2,3}^\circ\circ\zeta_{3,2}^\circ\circ\zeta_{4,1}^\circ  \nonumber\\
 &\qquad\vdots \nonumber\\
\xi_n&=\zeta_{1,n}^\circ\circ\zeta_{2,n-1}^\circ\circ\ldots\circ\zeta_{n-1,2}^\circ\circ\zeta_{n,1}\nonumber
\end{align}
where $\zeta_{i,k}^\circ =\zeta_{i,k}\circ\zeta_{i,k-1}\circ\ldots\circ\zeta_{i,1}$.
The composition sign between functions in the last line is, in fact, a long list of identities because $$\text{spt}(\zeta_{k,l}-\id)\cap\text{spt}(\zeta_{m,n}-\id)= \varnothing \quad\text{for}\quad k\neq m .$$ 
On every component $U_i$ sequence $\xi_n$ is converging to corresponding $\zeta$. After composition with the map $f$ the measure of $\Sigma\cap U_i$ is zero for every $i$.
\end{proof}
\subsection{Potential extension of the theorem}\label{sec:lowrank}
Mappings with $m$ \emph{or lower} rank. 
\noindent Under the conjecture 
\begin{equation}\label{trueornot}
 \cC^\infty_{\leq m}(U,\R^n)\overset{dense}{\subset}\cC^1_{\leq m}(U,\R^n)
\end{equation}
same is true for broader class of mappings $\cC^1_{\leq m}(U,\R^n)$. The argument is as follows.
Let $\|f_\eps -f\|_{\cC^1}\leq \eps$ and $f_\eps\in \cC^\infty_{\leq m}$. Split the domain into two sets 
$U_m=\lbrace x \:|\:r(Df_\eps(x))=m\rbrace$ and $U\setminus U_m$.
On the set $U_m$ do the same procedure as for maps belonging to $\cC^1_{=m}$, on the remaining set use the Sard theorem 
\cite[Theorem 3.4.3]{federer}. It states that 
if $B=\lbrace x\:\vert \: r(Df_\eps(x))\leq m-1\rbrace $, the set $U\subset\R^n$ and the mapping $f$ is of class $k$, then
$$\cH^{m-1+(n-(m-1))/k}(f(B))=0.$$
Apply it to $f_\eps$ and notice that if $k>n-m+1$ then $m-1+(n-(m-1))/k<m$ and then the $\cH^m$ measure of the image 
of the whole set $B$ equals zero, hence $\cH^m(f_\eps(B\cap\Sigma))=0$ as $f_\eps$ belongs to any class $\cC^k$. 
Therefore if \eqref{trueornot} is true then the thing we have to do is to modify map on the open set where rank of the 
Jacobian matrix is equal to $m$. 
\section{Acknowledgements}
The author wants to thank Paweł Strzelecki and Sławomir Kolasiński for having drawn his attention to this problem. He would like to thank the audience 
of the ``illegal seminar'' for constant support and useful discussions. He would also like thank two, one local and one non-local, early stage readers for their 
invaluable help in improving the manuscript. The author was partially supported by NCN Grant no. 2013/10/M/ST1/00416 \emph{Geometric curvature energies 
for subsets of the Euclidean space}.

\bibliographystyle{alpha}
\bibliography{bib}
\end{document}